\theoremstyle{plain}
\newtheorem{theorem}{Theorem}
\newtheorem{corollary}[theorem]{Corollary}
\theoremstyle{definition}
\theoremstyle{definition}              
\renewcommand{\geq}{\geqslant}
\renewcommand{\leq}{\leqslant}
\begin{document}

\newcommand{\TC}{T_{\textup{\texttt{C}}}}
\newcommand{\C}{\mathcal{C}}
\newcommand{\eps}{\varepsilon}
\newcommand\GL{\textup{GL}}
\newcommand\n{\newline}
\newcommand{\N}{\mathbb{N}}
\newcommand{\p}{\mathfrak{p}}
\newcommand{\SL}{\textup{SL}}
\newcommand{\Q}{\mathbb{Q}}
\newcommand{\R}{\mathbb{R}}
\def\st#1{\textup{\tiny #1}}
\def\sL{\textup{\tiny $L$}}\def\sR{\textup{\tiny $R$}}
\newcommand{\Z}{\mathbb{Z}}

\hyphenation{Frob-enius}

\begin{center}\large\sffamily\mdseries
  Distant parents in complete binary trees
\end{center}

\title[\tiny\upshape\rmfamily Distant parents in complete binary trees]{}


\author{\sffamily S.\,P. Glasby}

\address[Glasby]{
Department of Mathematics\\   
Central Washington University\\
WA 98926, USA. Also affiliated with The Faculty of Information
Sciences and Engineering, University of Canberra and The Centre for Mathematics of Symmetry and Computation at the
University of Western Australia. {\tt
 GlasbyS@gmail.com, \href{http://www.cwu.edu/~glasbys/}
{http://www.cwu.edu/$\sim$glasbys/}}}.


\begin{abstract}

There is a unique path from the root of a tree to any other vertex.
Every vertex, except the root, has a parent: the adjoining vertex
on this unique path. This is the conventional definition of the parent
vertex. For complete binary trees, however, we show that it is useful
to define another parent vertex, called a {\it distant parent}.
The study of distant parents leads to novel connections with dyadic
rational numbers. Moreover, we apply the concepts of close and distant
parent vertices to deduce an apparently new sense in which continued fractions
are `best' rational approximations.
\end{abstract}

\maketitle

\vskip-2mm
\centerline{\noindent\Small 2010 Mathematics subject classification: 05C05,
20E08, 11A55, 13-01}
\centerline{\noindent\Small Keywords: Parent, children, infinite complete
binary tree, string, inorder, continued fraction}

\section{Introduction}

There is a unique path from the root of a tree to any other vertex.
Hence each vertex in a tree with at most two children vertices can be
associated with a string $S$ of lefts and rights, an {\it $LR$-string}.
We shall focus exclusively on (infinite) complete binary trees, where each
vertex has precisely two children vertices. We henceforth abbreviate
these as `trees.' Rather than {\it identifying} the
the vertices with $LR$-strings, it will be convenient to initially {\it label}
vertices with $LR$-strings, see Fig.~\ref{F:Srev}. The empty $LR$-string
is denoted by $\eps$. Thus the left and right children vertices of $S$
are $C_L(S):=SL$ and $C_R(S):=SR$, respectively. If $S\neq\eps$, then
the conventional parent vertex is obtained by deleting the last symbol~of~$S$.

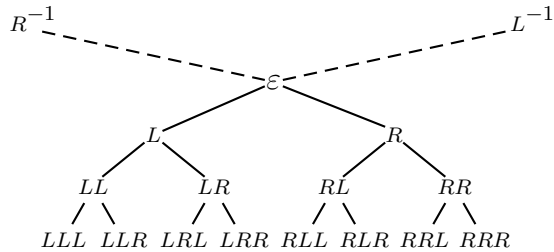
\begin{figure}[!ht]
  \begin{center}
  \psset{xunit=4mm}\psset{yunit=7mm}
  \begin{pspicture}(0,0)(15,4)
    \thicklines
    \rput(1,0){$\sL\sL\sL$}\rput(3,0){$\sL\sL\sR$}\rput(5,0){$\sL\sR\sL$}
    \rput(7,0){$\sL\sR\sR$}\rput(9,0){$\sR\sL\sL$}\rput(11,0){$\sR\sL\sR$}
    \rput(13,0){$\sR\sR\sL$}\rput(15,0){$\sR\sR\sR$}
    \rput(2,1){$\sL\sL$}\rput(6,1){$\sL\sR$}\rput(10,1){$\sR\sL$}
    \rput(14,1){$\sR\sR$}
    \rput(4,2){$\sL$}\rput(12,2){$\sR$}
    \rput(8,3){$\eps$}\rput(0,4.2){$\sR^{-1}$}\rput(16.6,4.2){$\sL^{-1}$}
    \psline(1.3,0.3)(1.7,0.7)\psline(5.3,0.3)(5.7,0.7)
    \psline(9.3,0.3)(9.7,0.7)\psline(13.3,0.3)(13.7,0.7)
    \psline(2.3,0.7)(2.7,0.3)\psline(6.3,0.7)(6.7,0.3)
    \psline(10.3,0.7)(10.7,0.3)\psline(14.3,0.7)(14.7,0.3)
    \psline(2.3,1.2)(3.7,1.85)\psline(10.3,1.2)(11.7,1.85)
    \psline(5.7,1.2)(4.3,1.85)\psline(13.7,1.2)(12.3,1.85)
    \psline(4.3,2.07)(7.7,2.92)\psline(8.3,2.92)(11.7,2.15)
    \psline[linestyle=dashed](0.3,3.96)(7.7,3.04)
    \psline[linestyle=dashed](8.3,3.04)(15.7,3.96)
  \end{pspicture}
  \end{center}
  \caption{Infinite complete binary tree with vertices labeled
  by $LR$-strings.}\label{F:Srev}
\end{figure}

In this paper we say that every string $S$ has a left and right
parent vertex denoted $P_L(S)=SR^{-1}$ and $P_R(S)=SL^{-1}$, respectively.
The expressions $SR^{-1}$ and $SL^{-1}$ are evaluated recursively using the rules:
$LL^{-1}=\eps$, $RR^{-1}=\eps$, $LR^{-1}=R^{-1}$, and~$RL^{-1}=L^{-1}$\label{p:}.
Thus when $S=L^2R^2$, for example, $P_L(S)=L^2R$ and $P_R(S)=L$.
Every vertex $S\neq\eps$ has a {\it close} and a {\it distant} parent
vertex denoted $P_C(S)$ and $P_D(S)$, respectively. The former is
the usual definition of parent, and the latter is studied in this note.

The aim of this note is to relate close and distant parents to
dyadic\footnote{A dyadic rational is one whose denominator is a power of two.}
rationals via simple recurrence relations, or explicit formulae,
see Theorems~\ref{T:main} and~\ref{T:r} in Section~\ref{S:main}.
Properties of distant parents are described using three metrics:
the length $|S|$ of a string, its position $N(S)$ on a tree, and
an order-preserving linear metric $r(S)$ defined later. Note that
$|S|$ and $N(S)$ are natural numbers, while $r(S)$ is a dyadic rational number
satisfying $0\leq r(S)\leq 2$.

Infinite complete binary trees have strong connections with
group theory~\cite{S06,S80}, with the theory of automata, and with the
analysis of computer programs. However, this largely expository note focuses on
elementary examples. An outline of this paper is as follows.
Section~\ref{S:main} relates close and distant parents of a vertex~$S$
to the the numbers
$|S|$, $N(S)$, and $r(S)$. In Section~\ref{S:app}, an infinite complete
binary tree whose vertices are continued fractions is considered.
The children vertices are most naturally defined in terms
of close and distant parents.
Continued fractions are well-known to be associated with
best rational approximations, see~\cite[4.5.3.\;Ex.\;42]{K2}
and~\cite[p.\;112]{GKP}. For example, if $a=[a_0,a_1,\dots]$
and $b=[b_0,b_1,\dots]$ are irrational numbers, then the
rational number $c$ between $a$ and $b$ with smallest numerator or denominator
is $c=[a_0,\dots,a_{k-1},\min(a_k,b_k)+1]$ where $a_i=b_i$ for $0\leq i<k$
and $a_k\neq b_k$. We shall show in Theorem~\ref{T:app1}(d) that
the close and distant parents to a continued fraction are the best lower-level
rational approximations on a complete binary tree of all rationals.

\section{The main results}\label{S:main}

In this section we define length $|S|$ of a string~$S$, 
its position $N(S)$ on a tree, and an order-preserving linear
function $r(S)$, see Fig.~\ref{F:rN}. These are related to the parent
vertices of $S$. Let $S(k_0,k_1,\dots,k_m)$ denote the $LR$-string
\begin{equation}\label{E:S}
  S(k_0,k_1,\dots,k_m):=\begin{cases}
      R^{k_0}L^{k_1}\cdots L^{k_{m-1}}R^{k_m}&\textup{if $m$ is even;}\\
      R^{k_0}L^{k_1}\cdots R^{k_{m-1}}L^{k_m}&\textup{if $m$ is odd;}\\
      \end{cases}
\end{equation}
where $k_0\in\N:=\{0,1,2,\dots\}$ and $k_i\geq1$ if $1\leq i\leq m$.
The {\it length} of $S=S(k_0,k_1,\dots,k_m)$ is defined to be
$|S|=k_0+k_1+\dots+k_m$. It counts the number of $LR$-symbols in $S$,
and gives the {\it level} of $S$ in the tree shown in Fig.~\ref{F:Srev}.
The {\it position} $N(S)$ of a string $S$ is determined
by Fig.~\ref{F:rN}(b), and a formula for $N(S)$ is given in
Theorem~\ref{T:r} below.

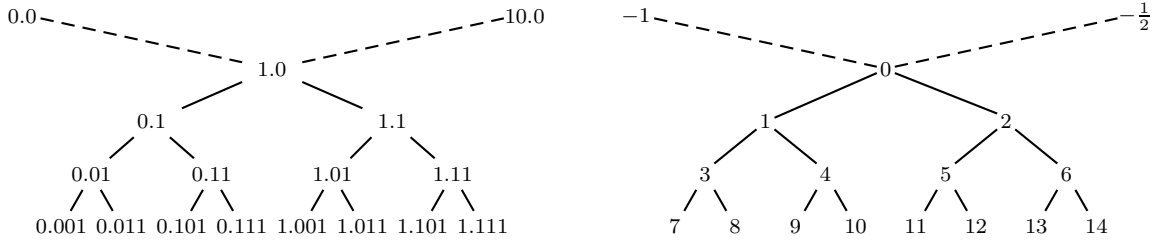
\begin{figure}[!ht]
  \begin{center}
  \psset{xunit=4mm}\psset{yunit=7mm}
  \begin{pspicture}(-0.5,0)(16.3,4)
    \thicklines
    \rput(1,0){\st{$0.001$}}\rput(3,0){\st{$0.011$}}\rput(5,0){\st{$0.101$}}
    \rput(7,0){\st{$0.111$}}\rput(9,0){\st{$1.001$}}\rput(11,0){\st{$1.011$}}
    \rput(13,0){\st{$1.101$}}\rput(15,0){\st{$1.111$}}
    \rput(2,1){\st{$0.01$}}\rput(6,1){\st{$0.11$}}\rput(10,1){\st{$1.01$}}
    \rput(14,1){\st{$1.11$}}
    \rput(4,2){\st{$0.1$}}\rput(12,2){\st{$1.1$}}
    \rput(8,3){\st{$1.0$}}\rput(-0.3,4){\st{$0.0$}}\rput(16.4,4){\st{$10.0$}}
    \psline(1.3,0.3)(1.7,0.7)\psline(5.3,0.3)(5.7,0.7)
    \psline(9.3,0.3)(9.7,0.7)\psline(13.3,0.3)(13.7,0.7)
    \psline(2.3,0.7)(2.7,0.3)\psline(6.3,0.7)(6.7,0.3)
    \psline(10.3,0.7)(10.7,0.3)\psline(14.3,0.7)(14.7,0.3)
    \psline(2.6,1.3)(3.4,1.7)\psline(10.6,1.3)(11.3,1.7)
    \psline(5.4,1.3)(4.6,1.7)\psline(13.4,1.3)(12.6,1.7)
    \psline(5,2.25)(7,2.75)\psline(9,2.75)(11,2.25)
    \psline[linestyle=dashed](0.3,3.96)(7,3.125)
    \psline[linestyle=dashed](9,3.125)(15.7,3.96)
  \end{pspicture}
  \hskip15mm
  \begin{pspicture}(0,0)(16.5,4)
    \thicklines
    \rput(1,0){\st{$7$}}\rput(3,0){\st{$8$}}\rput(5,0){\st{$9$}}
    \rput(7,0){\st{$10$}}\rput(9,0){\st{$11$}}\rput(11,0){\st{$12$}}
    \rput(13,0){\st{$13$}}\rput(15,0){\st{$14$}}
    \rput(2,1){\st{$3$}}\rput(6,1){\st{$4$}}\rput(10,1){\st{$5$}}
    \rput(14,1){\st{$6$}}
    \rput(4,2){\st{$1$}}\rput(12,2){\st{$2$}}
    \rput(8,3){\st{$0$}}\rput(-0.3,4){\st{$-1$}}\rput(16.3,4){\st{$-\frac12$}}
    \psline(1.3,0.3)(1.7,0.7)\psline(5.3,0.3)(5.7,0.7)
    \psline(9.3,0.3)(9.7,0.7)\psline(13.3,0.3)(13.7,0.7)
    \psline(2.3,0.7)(2.7,0.3)\psline(6.3,0.7)(6.7,0.3)
    \psline(10.3,0.7)(10.7,0.3)\psline(14.3,0.7)(14.7,0.3)
    \psline(2.3,1.2)(3.7,1.85)\psline(10.3,1.2)(11.7,1.85)
    \psline(5.7,1.2)(4.3,1.85)\psline(13.7,1.2)(12.3,1.85)
    \psline(4.3,2.07)(7.7,2.92)\psline(8.3,2.92)(11.7,2.15)
    \psline[linestyle=dashed](0.3,3.96)(7.7,3.04)
    \psline[linestyle=dashed](8.3,3.04)(15.7,3.96)
  \end{pspicture}
  \end{center}
  \caption{(a) binary expansions of $r(S)$; and (b) position values $N(S)$.}
    \label{F:rN}
\end{figure}

The vertices of the tree in Fig.~\ref{F:Srev} can be ordered from
left-to-right as the real numbers~$r$ in the interval $0<r<2$ are so ordered.
Consider a vertical line through the vertex (string) $S$ meeting the
horizontal interval $0<r<2$ at the real number~$r(S)$.
The elements of the monoid $\{L,R\}^*:=\{\eps,L,R,L^2,LR,R^2,L^3,\dots\}$
will be called {\it strings}, and those of
$\{L,R\}^\bigstar:=\{L^{-1},R^{-1}\}\cup\{L,R\}^*$
will be called {\it generalized string}.
A convenient recursive definition of $r$ is:
\begin{equation}\label{E:rdef}
  r(\eps)=1,\quad r(SL)=r(S)-2^{-|SL|},\quad r(SR)=r(S)+2^{-|SR|}
  \quad\textup{for $S\in\{L,R\}^*$}.
\end{equation}
A simple induction shows that $r(S)$ is a dyadic rational.
The value $r(R^{-1})=0$
is obtained from $r(SR)=r(S)+2^{-|SR|}$ by substituting $S=R^{-1}$.
Similarly, $r(L^{-1})=2$ is obtained from $r(SL)=r(S)-2^{-|SL|}$ by
substituting $S=L^{-1}$. Thus $r$ extends to generalized strings.
(Inorder traversal of a {\it finite} binary tree~\cite[\S2.3.1]{K}
coincides with $r$-ordering.)

Moving left decreases the $r$-value, and moving right increases the $r$-value.
However, a left move is not counteracted by any number of right moves; nor is
a right move counteracted by any number of left moves.
That is, 
\begin{equation}\label{E:LRorder}
  r(SL)<r(SLR)<r(SLR^2)<\cdots<r(S)<\cdots<r(SRL^2)<r(SRL)<r(SR).
\end{equation}
Thus $r\colon\{L,R\}^\bigstar\to[0,2]$ is an injective function which
orders the generalized strings.

The generalized strings $L^{-1}$ and $R^{-1}$ have length (or level) $-1$,
by definition. It follows from Theorem~\ref{T:main}(c) below that
the $S'\in\{L,R\}^\bigstar$ with $|S'|<|S|$ and $r(S')<r(S)$ which {\it maximizes} $r(S')$ is $P_L(S)=S'$.
Similarly, the $S'$ with $|S'|<|S|$ and $r(S')>r(S)$ which {\it minimizes}
$r(S')$ is $P_R(S)=S'$.
For this reason, the parents of a vertex in a tree are commonly
`best approximations' (in some sense) to the vertex.

\begin{theorem}\label{T:main}
Let $S\in\{L,R\}^*$ be a string, and let $m\geq0$ be an integer. Then
\begin{itemize}
  \item[(a)] $\{r(S)\mid\, |S|=m\}$ equals
  $\{\frac{2k-1}{2^m}\mid 1\leq k\leq 2^m\}$;
  \item[(b)] $\{r(S)\mid 0\leq |S|\leq m\}$ equals
  $\{\frac{\ell}{2^m}\mid 1\leq \ell\leq 2^{m+1}-1\}$;
  \item[(c)] the following recurrences hold
  \begin{align*}
    P_L(\eps)&=R^{-1},&&P_L(SL)=P_L(S), &&P_L(SR)=S;\\
    P_R(\eps)&=L^{-1},&&P_R(SL)=S, &&P_R(SR)=P_R(S);
  \end{align*}
  \item[(d)] $\max(|P_L(S)|,|P_R(S)|)<|S|$,
  and $|P_L(S)|\neq|P_R(S)|$ if $S\neq\eps$;
  \item[(e)] $r(P_L(S))=r(S)-2^{-|S|}$ and $r(P_R(S))=r(S)+2^{-|S|}$;
  \item[(f)] if $S=S(k_0,\dots,k_m)$, $X=S(k_0,\dots,k_{m-1},k_m-1)$,
  $Y=S(k_0,\dots,k_{m-2},k_{m-1}-1)$, then the following formulas hold
  \begin{equation}\label{E:ParStr}
    P_L(S)=\begin{cases}
        X&\textup{if $m$ is even;}\\
        Y&\textup{if $m$ is odd;}\\
        \end{cases}\hskip20mm
    P_R(S)=\begin{cases}
        Y&\textup{if $m$ is even;}\\
        X&\textup{if $m$ is odd.}\\
        \end{cases}
  \end{equation}
\end{itemize}
\end{theorem}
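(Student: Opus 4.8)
The plan is to prove the six parts in an order that lets later items reuse earlier ones, treating (a)--(b), which involve only the function~$r$, separately from (c)--(f), which involve the parent maps.

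For part~(a) I would induct on~$m$. The base case $m=0$ is immediate, since $r(\eps)=1=\frac{2\cdot1-1}{2^0}$. For the inductive step, every length-$(m+1)$ string is $SL$ or $SR$ with $|S|=m$, and~\eqref{E:rdef} gives $r(SL)=r(S)-2^{-(m+1)}$ and $r(SR)=r(S)+2^{-(m+1)}$. Writing the inductive hypothesis as $r(S)=\frac{2k-1}{2^m}=\frac{4k-2}{2^{m+1}}$ turns these into $\frac{4k-3}{2^{m+1}}$ and $\frac{4k-1}{2^{m+1}}$. The key observation is that as $k$ runs over $1,\dots,2^m$ the numerators $4k-3$ exhaust the residue class $1\pmod4$ and the numerators $4k-1$ exhaust $3\pmod4$, so jointly they list every odd integer in $[1,2^{m+2}-1]$ exactly once; this is precisely the level-$(m+1)$ set. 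Part~(b) then follows from~(a) with no further induction: taking the union over $0\leq|S|=j\leq m$ and rewriting $\frac{2k-1}{2^j}=\frac{(2k-1)2^{m-j}}{2^m}$, the numerators $(2k-1)2^{m-j}$ are exactly the integers of $[1,2^{m+1}-1]$ with $2$-adic valuation $m-j$. Since valuation is unique, the disjoint union over $j$ is a bijection onto $\{1,\dots,2^{m+1}-1\}$.

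Part~(c) is a direct calculation from the definitions $P_L(S)=SR^{-1}$, $P_R(S)=SL^{-1}$ and the reduction rules on p.~\pageref{p:}: for example $P_L(SL)=SLR^{-1}=SR^{-1}=P_L(S)$ via $LR^{-1}=R^{-1}$, and $P_L(SR)=SRR^{-1}=S$ via $RR^{-1}=\eps$, with the remaining four identities identical in spirit. With~(c) available, part~(e) is a short induction on~$|S|$: the base case $S=\eps$ reads $r(R^{-1})=0=r(\eps)-2^{-0}$, and the inductive step splits on the last symbol, combining $r(SL)=r(S)-2^{-|SL|}$ with $P_L(SL)=P_L(S)$ (and the mirror statements for $R$) to propagate $r(P_L(S))=r(S)-2^{-|S|}$. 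Part~(d) also follows from~(c) by induction on~$|S|$: at each step exactly one of $P_L(S),P_R(S)$ equals the conventional parent, of length $|S|-1$, while the other is inherited from a strictly shorter string and hence, by the hypothesis, has length strictly below $|S|-1$; thus both lengths are less than~$|S|$ and, for $S\neq\eps$, they cannot coincide.

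For part~(f) I would iterate the recurrences of~(c) rather than reduce $R^{-1}$ by hand. Suppose $m$ is even, so $S=S(k_0,\dots,k_m)$ ends in $R^{k_m}$. Peeling the trailing block off with $P_L(S'R)=S'$ gives $P_L(S)=S(k_0,\dots,k_m-1)=X$ directly, whereas $P_R(S'R)=P_R(S')$ lets the right parent recurse back through all $k_m$ copies of~$R$, reaching a string ending in $L^{k_{m-1}}$, after which $P_R(S''L)=S''$ strips a single~$L$ to give $P_R(S)=S(k_0,\dots,k_{m-1}-1)=Y$; the odd case is the exact mirror image. The main obstacle I anticipate is bookkeeping at the degenerate boundaries rather than any conceptual difficulty: when $k_m=1$ or $k_{m-1}=1$ a block collapses to exponent zero and the $S(\cdots)$ label must be reinterpreted with one fewer block, and when $m=0$ the map~$Y$ erases the last remaining block and lands on a generalized string $L^{-1}$ or $R^{-1}$. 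These are exactly the situations governed by the base clauses $P_L(\eps)=R^{-1}$ and $P_R(\eps)=L^{-1}$ of~(c), so the iteration stays valid; one need only fix the empty-exponent convention so that~\eqref{E:ParStr} reads correctly in every case.
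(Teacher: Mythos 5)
Your proposal is correct and follows essentially the same route as the paper: induction on length for (a) and (e), direct application of the cancellation/recurrence rules for (c) and (f), and the same set identification for (b), including the same attention to the degenerate $m=0$ case in (f). The only cosmetic differences are that you identify the numerators in (b) via $2$-adic valuation where the paper counts reduced fractions, and you prove (d) by induction through (c) where the paper counts canceled symbols directly; both variants are sound.
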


\begin{proof}
(a)~We use induction on $m$. The result is true when $m=0$ as $r(\eps)=1$.
Assume now that $|S|=m>0$. Then $S$ equals $S'L$ or $S'R$ where $|S'|=m-1$.
By induction, $r(S')=\frac{2k-1}{2^{m-1}}$ for a unique $k$ with
$1\leq k\leq 2^{m-1}$.
It follows from (\ref{E:rdef}) that $r(S'L)=\frac{4k-3}{2^m}$ and
$S'R=\frac{4k-1}{2^m}$, see Fig.~\ref{F:consec}.
These $r$-values equal $\frac{2\ell-1}{2^m}$ for a unique $\ell$ with
$1\leq\ell\leq 2^m$. This proves part~(a).
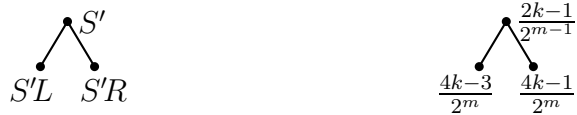
\begin{figure}[!ht]
  \begin{center}
  \psset{xunit=6mm,yunit=6mm}
  \begin{pspicture}(-1,-0.5)(1,1.3)
    \psline(-0.6,0)(0,1)\psline(0.6,0)(0,1)
    \rput(-0.8,-0.5){$S'\kern-2pt L$}\rput(0.8,-0.5){$S'\kern-2pt R$}\rput(0.55,1){$S'$}
    \pscircle*(0,1){0.06}\pscircle*(-0.6,0){0.06}\pscircle*(0.6,0){0.06}
  \end{pspicture}
  \hskip45mm
  \begin{pspicture}(-1,-0.5)(1,1.3)
    \psline(-0.6,0)(0,1)\psline(0.6,0)(0,1)
    \rput(-0.9,-0.6){$\frac{4k-3}{2^m}$}
    \rput(0.9,-0.6){$\frac{4k-1}{2^m}$}\rput(0.9,1){$\frac{2k-1}{2^{m-1}}$}
    \pscircle*(0,1){0.06}\pscircle*(-0.6,0){0.06}\pscircle*(0.6,0){0.06}
  \end{pspicture}
  \end{center}
  \caption{Children rules (a) for strings; (b) for $r$-values.}
  \label{F:consec}
\end{figure}

(b)~By part~(a), $\{r(S)\mid 0\leq |S|\leq m\}$ equals
$\bigcup_{i=0}^m\{\frac{2k-1}{2^i}\mid 1\leq k\leq 2^i\}$. This is
a disjoint union as the fractions are reduced, and hence distinct. The
union has
$\sum_{i=0}^m 2^i=2^{m+1}-1$ fractions, each of which is more than zero,
and less than two.
Since each fraction can be written with a denominator of $2^m$, the union
equals $\{\frac{\ell}{2^m}\mid 1\leq \ell\leq 2^{m+1}-1\}$, as desired.

(c)~The initial conditions and the recurrences follow from the definition of
$P_L(S)$ and $P_R(S)$ together with the rules for postmultiplying by
$L^{-1}$ and $R^{-1}$ on page~\pageref{p:}.

(d)~Since $P_L(S)=SR^{-1}$ and $P_R(S)=SL^{-1}$, and $|S|$ counts the number
of symbols ($L$s and $R$s) in $S$, it follows that
$|P_L(S)|<|S|$ and $|P_R(S)|<|S|$.
Suppose $S\neq\eps$. One of $P_L(S)$ and $P_R(S)$, the close parent,
has length $|S|-1$ because precisely one symbol is canceled.
For the distant parent, however, at least two symbols are canceled (this
needs appropriate interpretation if $S=L$ or $R$). Hence
$|P_L(S)|\neq|P_R(S)|$ if $S\neq\eps$, and the parents of $S$ lie on
different levels.

(e)~We use induction on $|S|$. The result is true for $|S|=0$.
Suppose now that $|S|>0$. Then $S=S'L$, or $S=S'R$, for
some $S'\in\{L,R\}^*$. Suppose
$S=S'L$. Then $P_R(S)=S'$ and $r(S'L)=r(S')-2^{-|S'L|}$ by (\ref{E:rdef}).
Thus $r(P_R(S))=r(S)+2^{-|S|}$. Since $S=S'L$, part~(c) gives $P_L(S)=P_L(S')$, 
and induction gives $r(P_L(S'))=r(S')-2^{-|S'|}$. Hence
\[
  r(P_L(S))=r(P_L(S'))=r(S')-2^{-|S'|}=r(S')-2^{-|S|}-2^{-|S|}=r(S)-2^{-|S|}.
\]
Similar arguments may be used to handle the case when $S=S'R$.

(f)~Formula~(\ref{E:ParStr}) needs interpretation when $m=0$.
In this case, $S=\eps$, $k_0=0$, $X=R^{-1}$, and $Y=L^{-1}$. This agrees
with $P_L(\eps)=R^{-1}$ and $P_R(\eps)=L^{-1}$. Suppose now that $m>0$.
The last symbol of $S$ is $R^{k_m}$ if $m$ is even, and $L^{k_m}$ if $m$ is odd,
where $k_m\geq1$. Formula~(\ref{E:ParStr}) now follows by canceling, as
$P_L(S)=SR^{-1}$ and $P_R(S)=SL^{-1}$.
\end{proof}

Counting the strings in $\{L,R\}^*$ in Fig.~\ref{F:Srev} from top down
and then left-to-right gives the tree in Fig.~\ref{F:rN}b.
Recursively define a bijective {\it position} function
$N\colon \{L,R\}^*\to\N$~by
\begin{equation}\label{E:N}
  N(\eps)=0,\quad N(SL)=2N(S)+1,\textup{ and}\quad N(SR)=2N(S)+2
  \qquad\textup{for $S\in\{L,R\}^*$}.
\end{equation}
Substituting $S=L^{-1}$ into $N(SL)=2N(S)+1$ gives $N(L^{-1})=-\frac12$,
and substituting $S=R^{-1}$ into $N(SR)=2N(S)+2$ gives $N(R^{-1})=-1$.

As a consequence of Theorem~\ref{T:main}(d), each $\eps\neq S\in\{L,R\}^*$
has a {\it close parent}, denoted $P_C(S)$, and a {\it distant parent},
denoted $P_D(S)$. Set $n:=N(S)$. Then $P_C(S)$ equals $P_L(S)$ if $n$ is even,
and $P_R(S)$ if $n$ is odd.
Similarly, $P_D(S)$ equals $P_L(S)$ if $n$ is odd, and $P_R(S)$ if $n$ is even.
Table~\ref{t:NPD} suggests that
$N(P_C(S))=\big\lfloor\frac{N(S)-1}2\big\rfloor$ holds.
This is easily proved. However, a formula for the numbers $N(P_D(S))$
is more mysterious.
\begin{table}[!ht]
  \begin{center}
    \begin{tabular}{|c|c|c|c|c|c|c|c|c|c|c|c|c|c|c|c|c|c|c|c|c|c|c|c|} \hline
    \st{$n\kern-1pt=\kern-1ptN(S)$}&\st{1}&\st{2}&\st{3}&\st{4}&\st{5}&\st{6}&\st{7}&\st{8}&\st{9}&\st{10}&\st{11}&\st{12}&\st{13}&\st{14}&\st{15}&\st{16}&\st{17}&\st{18}&\st{19}&\st{20}&\st{21}&\st{22}\\ \hline
    \st{\kern-3pt$N(P_C(S))$\kern-3pt}&\st{0}&\st{0}&\st{1}&\st{1}&\st{2}&\st{2}&\st{3}&\st{3}&\st{4}&\st{4}&\st{5}&\st{5}&\st{6}&\st{6}&\st{7}&\st{7}&\st{8}&\st{8}&\st{9}&\st{9}&\st{10}&\st{10}\\ \hline
    \st{\kern-3pt$N(P_D(S))$\kern-3pt}&\st{$-1$}&\st{$-\frac12$}&\st{$-1$}&\st{0}&\st{0}&\st{$-\frac12$}&\st{$-1$}&\st{1}&\st{1}&\st{0}&\st{0}
  &\st{2}&\st{2}&\st{$-\frac12$}&\st{$-1$}&\st{3}&\st{3}&\st{1}&\st{1}&\st{4}&\st{4}&\st{0}\\ \hline
    \end{tabular}
  \end{center}
  \vskip1mm
  \caption{Position numbers of close and distant parents}\label{t:NPD}
\end{table}
The integer-valued function $n\mapsto 2N(P_D(N^{-1}(n)))+1$ does not appear
(at the time of writing) in the On-Line Encyclopedia of Integer
Sequences, see~\url{http://oeis.org}. A formula for $N(P_D(S))$ can be computed from
(\ref{E:1},b) below.

\begin{theorem}\label{T:r}
Set $S:=S(k_0,\dots,k_{m-1},k_m)$. The functions $P_C$, $P_D$, $N$, and $r$
can be computed (nonrecursively) by the formulas
\begin{subequations}\label{E:0}
\begin{align}
  P_C(S)&=S(k_0,\dots,k_{m-1},k_m-1)\quad\textup{and}\quad
  P_D(S)=S(k_0,\dots,k_{m-2},k_{m-1}-1);\label{E:1}\\
  N(S)&=\begin{cases}
      2^{k_0+\cdots+k_m+1}-2^{k_1+\cdots+k_m}+2^{k_2+\cdots+k_m}-\cdots+2^{k_m}-2
        &\textup{if $m$ is even;}\\
      2^{k_0+\cdots+k_m+1}-2^{k_1+\cdots+k_m}+2^{k_2+\cdots+k_m}-\cdots-2^{k_m}-1
        &\textup{if $m$ is odd;}\\
      \end{cases}\label{E:2}\\
  r(S)&=2\left(1-2^{-k_0}+2^{-k_0-k_1}-\cdots+
  (-1)^m2^{-k_0-k_1-\cdots-k_{m-1}}+(-1)^{m+1}2^{-|S|-1}\right)  \label{E:3}\\
      &=2\left(\sum_{i=0}^{m+1}(-1)^i2^{-\eps_j}\right)
        +(-1)^{m+2}2^{-|S|}\quad\textup{where $\eps_j=\sum_{j=0}^{i-1} k_j$.}
  \label{E:4}
\end{align}
\end{subequations}
\end{theorem}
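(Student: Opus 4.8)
The plan is to prove the four formulas of Theorem~\ref{T:r} by deriving each from the recursive characterizations already established, treating the even and odd cases of $m$ in parallel. For part~\eqref{E:1}, I would simply invoke Theorem~\ref{T:main}(f) together with the close/distant parent definitions: since $P_C(S)$ is the parent on level $|S|-1$ (exactly one symbol canceled) and $P_D(S)$ is the parent at least two levels up, I must match these against the formulas $X=S(k_0,\dots,k_{m-1},k_m-1)$ and $Y=S(k_0,\dots,k_{m-2},k_{m-1}-1)$ from~\eqref{E:ParStr}. The key observation is that $|X|=|S|-1$ always, so $X$ is the close parent regardless of the parity of $m$, while $Y$ (dropping from $k_{m-1}$) is the distant parent; this immediately gives \eqref{E:1} without a parity split, since the parity-dependence in \eqref{E:ParStr} is precisely absorbed by the $P_L/P_R$-to-$P_C/P_D$ correspondence.

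For the position formula~\eqref{E:2}, I would use induction on $m$, or more directly unwind the recurrence~\eqref{E:N}. Writing $S=S(k_0,\dots,k_m)$ as a concatenation of blocks $R^{k_0}L^{k_1}R^{k_2}\cdots$, each symbol appended multiplies $N$ by $2$ and adds $1$ (for $L$) or $2$ (for $R$). The cleanest route is to track what happens as we append a full block $L^{k}$ or $R^{k}$ to a string with position value $n$: appending $R^{k}$ sends $n\mapsto 2^k n+2(2^k-1)$ and appending $L^{k}$ sends $n\mapsto 2^k n+(2^k-1)$. Iterating these block maps starting from $N(\eps)=0$ and collecting terms should produce the alternating sum in~\eqref{E:2}; the sign alternation comes from the alternation between $L$-blocks and $R$-blocks, and the terminal constant ($-2$ versus $-1$) is fixed by whether the last block is an $R$-block ($m$ even) or an $L$-block ($m$ odd). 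I would verify the base case $m=0$, i.e. $S=R^{k_0}$ with $N=2^{k_0+1}-2$, directly.

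For the $r$-value formula~\eqref{E:3}, induction on $m$ (or on $|S|$) using the recursion~\eqref{E:rdef}, or equivalently Theorem~\ref{T:main}(e), is natural. Starting from $r(\eps)=1$ and appending blocks, each right step adds $2^{-(\text{current length})}$ and each left step subtracts it; summing a geometric block-contribution $\sum_{j} 2^{-(p+j)}$ telescopes into a difference of two powers of two, and the alternating block structure produces the alternating partial sums $1-2^{-k_0}+2^{-k_0-k_1}-\cdots$ with the leading factor of $2$. The final term $(-1)^{m+1}2^{-|S|-1}$ accounts for the last partial block. I would then observe that~\eqref{E:4} is merely a repackaging of~\eqref{E:3}: setting $\eps_j=\sum_{j=0}^{i-1}k_j$ (the cumulative exponent) rewrites the alternating sum compactly, and the last displayed term $(-1)^{m+2}2^{-|S|}$ matches $(-1)^{m+1}2^{-|S|-1}$ after absorbing the factor of $2$, so~\eqref{E:4} requires only algebraic bookkeeping once~\eqref{E:3} is proved.

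The main obstacle I anticipate is the careful tracking of signs and the off-by-one terminal constants in~\eqref{E:2}, where the parity of $m$ governs both the last sign in the alternating sum and the additive constant $-2$ versus $-1$; an induction that appends one block at a time, flipping parity each step, is the safest way to keep these consistent. A secondary subtlety is the boundary interpretation when $k_0=0$ (so the string genuinely begins with an $L$-block) or when the distant-parent formula in~\eqref{E:1} reduces $k_{m-1}$ to a value that forces a shorter compressed representation; these degenerate cases should be checked against the generalized-string conventions $r(R^{-1})=0$, $r(L^{-1})=2$ and the small-$n$ entries of Table~\ref{t:NPD}, but they do not affect the generic derivation.
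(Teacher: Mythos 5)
Your proposal is correct, and for (\ref{E:1}) and (\ref{E:4}) it coincides with the paper: (\ref{E:1}) is read off from the cancellation rules for $SR^{-1}$ and $SL^{-1}$, with the parity split of Theorem~\ref{T:main}(f) absorbed by the definition of $P_C$ and $P_D$ (concretely, $S$ ends in $R$ exactly when $N(S)$ is even, so the one-symbol cancellation always lands on $X$), and (\ref{E:4}) is pure bookkeeping from (\ref{E:3}). For (\ref{E:2}) and (\ref{E:3}) you run the induction in the opposite direction from the paper: the paper takes the closed forms as given and verifies that they satisfy the one-symbol recurrences (\ref{E:N}) and (\ref{E:rdef}), splitting on whether the appended symbol extends the last block or opens a new one, whereas you compose the recurrences over entire blocks --- $n\mapsto 2^k n+2(2^k-1)$ for $R^k$ and $n\mapsto 2^k n+(2^k-1)$ for $L^k$, and telescoping geometric sums for $r$ --- and iterate from $N(\eps)=0$ and $r(\eps)=1$. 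Both arguments rest on exactly the same recurrences and are equally rigorous; the paper's verification is shorter once the formulas have been guessed (as the author remarks), while your block-map iteration actually \emph{derives} the alternating sums, making the sign alternation and the terminal constants $-2$ versus $-1$ in (\ref{E:2}) transparent rather than merely checked. Your flagged boundary cases ($k_0=0$, and the degenerate distant parent) are handled correctly by the generalized-string conventions, as you say.
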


\begin{proof}
Equation (\ref{E:1}) follows easily from the rules for postmultiplying by
$R^{-1}$ or $L^{-1}$.
To verify formulas (\ref{E:2}) and (\ref{E:3}) it suffices to prove that
they satisfy their respective recurrence relations. This is somewhat
easier, paradoxically, than guessing the formulas in the first place.
We begin by showing that (\ref{E:2}) satisfies the recurrence~(\ref{E:N}).
First, $N(R^{k_0})=2^{k_0+1}-2$ holds by (\ref{E:2}). Setting $k_0=0$ shows
$N(\eps)=0$ which agrees with~(\ref{E:N}). Second, we must show $N(SL)=2N(S)+1$.
When $m$ is even, $S$ ends in $R^{k_m}$ and so
$SL=S(k_0,\dots,k_{m-1},k_m,1)$. It follows from the second line of (\ref{E:2})
that $N(SL)=2N(S)+1$ holds. If $m$ is odd, then $S$ ends in $L^{k_m}$ and
so $SL$ equals $S(k_0,\dots,k_{m-1},k_m+1)$. Again the second line of
(\ref{E:2}) implies that $N(SL)=2N(S)+1$ holds. Similar reasoning
involving the first line of (\ref{E:2}) shows that
$N(SR)=2N(S)+2$ holds, independent of the parity of $m$.
Hence (\ref{E:2}) is the (unique) solution to (\ref{E:N}).

The last term of  formula (\ref{E:3}) equals the last two terms of (\ref{E:4})
because
\[
  2(-1)^{m+1}2^{-|S|-1}=2(-1)^{m+1}2^{-|S|}+(-1)^{m+2}2^{-|S|}.
\]
Hence (\ref{E:3}) equals (\ref{E:4}).
We now prove that the solution to the recurrence relation (\ref{E:rdef}) is
given by the formula (\ref{E:3}). The base case $r(\eps)=1$ accords
with formula (\ref{E:3}). Suppose that $S=S(k_0,\dots,k_m)$ where $m$ is even.
Then $SR=S(k_0,\dots,k_{m-1},k_m+1)$.
Comparing the expressions for $\frac{r(SR)}{2}$ and $\frac{r(S)}{2}$ given
by (\ref{E:3}) yields
\begin{subequations}\label{E:r0}
\begin{align}
  \frac{r(SR)}{2}&=\frac{r(S)}{2}-(-1)^{m+1}2^{-|S|-1}+(-1)^{m+1}2^{-|SR|-1}\label{E:r1}\\
  &=\frac{r(S)}{2}+(-1)^{m+2}2^{-|S|-2}(2-1)
  =\frac{r(S)}{2}+2^{-|SR|-1}.
\end{align}
\end{subequations}
If $m$ is even, then $SL=S(k_0,\dots,k_m,1)$.
By (\ref{E:3}), the last two terms of $\frac{r(SL)}{2}$ are
\[
  (-1)^{m+1}2^{-|S|}+(-1)^{m+2}2^{-|SL|-1}=(-1)^{m+2}2^{-|S|-2}(-2^2+1)
  =-3\cdot2^{-|S|-2}.
\]
Comparing the expressions for $\frac{r(SL)}{2}$ and $\frac{r(S)}{2}$ given
by (\ref{E:3}) yields
\begin{subequations}\label{E:r3}
\begin{align}
  \frac{r(SL)}{2}&=\frac{r(S)}{2}-(-1)^{m+1}2^{-|S|-1}-3\cdot2^{-|S|-2}\label{E:r4}\\
  &=\frac{r(S)}{2}+2^{-|S|-2}(2-3)
  =\frac{r(S)}{2}-2^{-|SL|-1}.
\end{align}
\end{subequations}
Equations (\ref{E:r0}) and (\ref{E:r3}) accord with the recurrence relation
(\ref{E:rdef}). The proof when $m$ is odd is similar. Hence the solution
to the the recurrence relation (\ref{E:rdef}) is (\ref{E:3}), as desired.
\end{proof}

In Fig.~\ref{F:MN} we compare the length function $N\colon\{L,R\}^*\to\N$
with another length function $M\colon\{L,R\}^*\to\N$ defined by $M(\eps)=0$ and
$M(S(k_0,k_1,\dots,k_m))=m$ if $k_m\geq1$.
\begin{figure}[!ht]
  \begin{center}
  \psset{xunit=4mm}\psset{yunit=7mm}
  \begin{pspicture}(0,0)(16.5,3)
    \thicklines
    \rput(1,0){\st{$1$}}\rput(3,0){\st{$2$}}\rput(5,0){\st{$3$}}
    \rput(7,0){\st{$2$}}\rput(9,0){\st{$1$}}\rput(11,0){\st{$2$}}
    \rput(13,0){\st{$1$}}\rput(15,0){\st{$0$}}
    \rput(2,1){\st{$1$}}\rput(6,1){\st{$2$}}\rput(10,1){\st{$1$}}
    \rput(14,1){\st{$0$}}
    \rput(4,2){\st{$1$}}\rput(12,2){\st{$0$}}
    \rput(8,3){\st{$0$}}
    \psline(1.3,0.3)(1.7,0.7)\psline(5.3,0.3)(5.7,0.7)
    \psline(9.3,0.3)(9.7,0.7)\psline(13.3,0.3)(13.7,0.7)
    \psline(2.3,0.7)(2.7,0.3)\psline(6.3,0.7)(6.7,0.3)
    \psline(10.3,0.7)(10.7,0.3)\psline(14.3,0.7)(14.7,0.3)
    \psline(2.3,1.2)(3.7,1.85)\psline(10.3,1.2)(11.7,1.85)
    \psline(5.7,1.2)(4.3,1.85)\psline(13.7,1.2)(12.3,1.85)
    \psline(4.3,2.07)(7.7,2.92)\psline(8.3,2.92)(11.7,2.15)
  \end{pspicture}
  \hskip15mm
  \begin{pspicture}(0,0)(16.5,3)
    \thicklines
    \rput(1,0){\st{$7$}}\rput(3,0){\st{$8$}}\rput(5,0){\st{$9$}}
    \rput(7,0){\st{$10$}}\rput(9,0){\st{$11$}}\rput(11,0){\st{$12$}}
    \rput(13,0){\st{$13$}}\rput(15,0){\st{$14$}}
    \rput(2,1){\st{$3$}}\rput(6,1){\st{$4$}}\rput(10,1){\st{$5$}}
    \rput(14,1){\st{$6$}}
    \rput(4,2){\st{$1$}}\rput(12,2){\st{$2$}}
    \rput(8,3){\st{$0$}}
    \psline(1.3,0.3)(1.7,0.7)\psline(5.3,0.3)(5.7,0.7)
    \psline(9.3,0.3)(9.7,0.7)\psline(13.3,0.3)(13.7,0.7)
    \psline(2.3,0.7)(2.7,0.3)\psline(6.3,0.7)(6.7,0.3)
    \psline(10.3,0.7)(10.7,0.3)\psline(14.3,0.7)(14.7,0.3)
    \psline(2.3,1.2)(3.7,1.85)\psline(10.3,1.2)(11.7,1.85)
    \psline(5.7,1.2)(4.3,1.85)\psline(13.7,1.2)(12.3,1.85)
    \psline(4.3,2.07)(7.7,2.92)\psline(8.3,2.92)(11.7,2.15)
  \end{pspicture}
  \end{center}
  \caption{Values for the length functions (a) $M$, and (b) $N$.}
    \label{F:MN}
\end{figure}
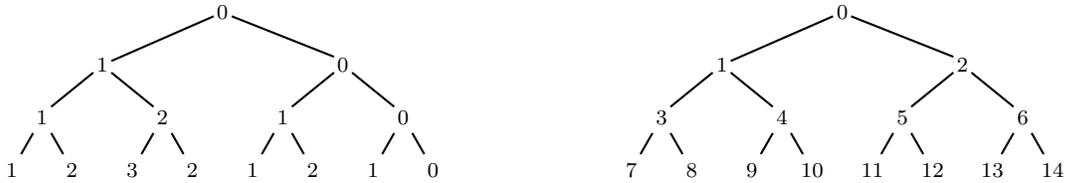

\begin{corollary}\label{C}
\textup{(a)} With the above definition, $M(S)\equiv N(S)\pmod 2$
for all $S\in\{L,R\}^*$.\n
\textup{(b)} Let $S=R^{k_0}L^{k_1}R^{k_2}\cdots$ and
$S'=R^{k'_0}L^{k'_1}R^{k'_2}\cdots$
be finite strings. Then $r(S)<r(S')$ if and only if $k_0<k'_0$, or
$k_0=k'_0$ and $k_1>k'_1$, or $(k_0,k_1)=(k'_0,k'_1)$ and $k_2<k'_2$, or
$(k_0,k_1,k_2)=(k'_0,k'_1,k'_2)$ and $k_3>k'_3$, \dots, using an
`alternating lexicographic' ordering.
\end{corollary}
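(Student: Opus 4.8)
The plan is to prove the two parts of Corollary~\ref{C} separately, deriving both from the explicit formulas in Theorem~\ref{T:r}.

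For part~(a), the cleanest route is to read the parity of $N(S)$ directly off formula~(\ref{E:2}). When $m$ is even, the last line of~(\ref{E:2}) contributes a trailing $-2$ to a sum of powers $2^{k_0+\cdots+k_m+1},\dots,2^{k_m}$, every one of which has exponent at least $1$ (since $k_m\geq 1$ when $m>0$), hence every such power is even; so $N(S)$ is even, agreeing with $M(S)=m$ being even. When $m$ is odd, the trailing term of~(\ref{E:2}) is $-1$ while all the powers $2^{\,\cdot\,}$ are again even, so $N(S)\equiv 1\pmod 2$, agreeing with $M(S)=m$ odd. The degenerate cases $S=\eps$ (where $M(\eps)=N(\eps)=0$) and $k_0=0$ must be checked separately, but each reduces to inspecting the trailing constant. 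Thus $N(S)\equiv m\equiv M(S)\pmod 2$, which is exactly~(a). I expect this part to be short.

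For part~(b), the plan is to use the explicit formula~(\ref{E:3}) for $r(S)$. Write $r(S)=2\sum_{i\geq 0}(-1)^i 2^{-(k_0+\cdots+k_{i-1})}$, truncated appropriately with the final half-power correction term. The alternating lexicographic comparison is really a statement about when such an alternating series of decreasing dyadic contributions is larger for one string than the other. Let $t$ be the least index at which $k_t\neq k'_t$. For $i<t$ the partial sums of $r(S)$ and $r(S')$ coincide, so the sign of $r(S)-r(S')$ is governed by the tail starting at index $t$. The key observation, which mirrors the strict chain~(\ref{E:LRorder}), is that the $i=t$ contribution dominates the entire remaining tail: the terms $2^{-(k_0+\cdots+k_{i-1})}$ decrease geometrically once one coordinate is made larger, so no accumulation of later terms can reverse the comparison decided at index~$t$. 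Concretely, if $t$ is even, increasing $k_t$ (moving to a strictly larger prefix sum in the exponent) decreases the subtracted/added amount in a way that makes $r$ larger when $k_t>k'_t$ — which after unwinding signs gives the claimed pattern ``$k_t<k'_t$ for $t$ even, $k_t>k'_t$ for $t$ odd.'' I would verify the direction of the inequality at a single generic even $t$ and a single generic odd $t$, and appeal to symmetry of the argument for the rest.

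The main obstacle will be making the ``index~$t$ dominates the tail'' step rigorous, since the comparison must hold strictly and the competing strings can have very different lengths (one may even terminate while the other continues). The clean way to handle this is to avoid estimating the tail by hand and instead reduce~(b) to the already-established monotonicity~(\ref{E:LRorder}) together with the structure~(\ref{E:S}): two strings agreeing in $k_0,\dots,k_{t-1}$ share a common prefix $S(k_0,\dots,k_{t-1})$, and the remaining blocks $R^{\pm}L^{\pm}\cdots$ are exactly nested left/right moves from that common vertex. Then the alternating pattern in the $k_i$ is precisely the left/right alternation recorded in~(\ref{E:S}), and~(\ref{E:LRorder}) says that at each further block a larger exponent moves $r$ monotonically in a direction that alternates with the parity of the block index. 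Since~(\ref{E:LRorder}) already asserts that a single such move cannot be counteracted by any number of subsequent moves in the opposite nesting direction, the comparison at the first differing block is decisive. This reduces~(b) to a bookkeeping check of which direction (increase vs.\ decrease of $r$) corresponds to increasing $k_t$ for each parity of $t$, which follows at once from the two recurrences in~(\ref{E:rdef}).
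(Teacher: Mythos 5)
Your proof is correct, and the two halves fare differently against the paper. For part~(b) you end up exactly where the paper does: the published proof just observes that formula~(\ref{E:3}) implies~(\ref{E:LRorder}) and that the ``a left move is not counteracted by any number of right moves'' property forces the alternating lexicographic order; your reduction to the common prefix $S(k_0,\dots,k_{t-1})$ and the comparison at the vertex $Q=S(k_0,\dots,k_{t-1})R^{k_t}$ (or $\cdots L^{k_t}$) is the same idea, spelled out in more detail --- and your instinct to abandon the direct tail estimate on~(\ref{E:4}) was sound, since bounding the tail by hand for strings of different lengths is exactly the fiddly part that~(\ref{E:LRorder}) packages away. For part~(a), however, you take a genuinely different route. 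The paper argues by induction on the string, tracking how $M$ changes under $S\mapsto SL$ and $S\mapsto SR$ in each parity case of $m$ and comparing against the recurrence~(\ref{E:N}). You instead read the parity of $N(S)$ straight off the closed formula~(\ref{E:2}): every power of two occurring there has exponent at least $1$ (the exponents $k_i+\cdots+k_m$ for $i\geq1$ are at least $k_m\geq1$, and the leading exponent is $k_0+\cdots+k_m+1\geq1$), so $N(S)$ is congruent modulo $2$ to the trailing constant, $-2$ when $m$ is even and $-1$ when $m$ is odd, which is precisely the parity of $M(S)=m$; the cases $S=\eps$ and $k_0=0$ check out the same way. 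Your version is shorter and arguably more transparent, at the cost of invoking Theorem~\ref{T:r}, whereas the paper's induction needs only the recurrence~(\ref{E:N}); since~(\ref{E:2}) is proved before the corollary, both are legitimate.
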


\begin{proof} (a)~Certainly $M(S)\equiv N(S)\pmod 2$ holds when $S=\eps$,
as $M(\eps)=N(\eps)=0$. Assume $S:=S(k_0,\dots,k_m)$ and
$M(S)\equiv N(S)\pmod 2$ holds. If $m=M(S)$ is even, then
$S$ ends in $R^{k_m}$, so $SL=S(k_0,\dots,k_m,1)$ and
$M(SL)=M(S)+1$ is odd. Hence, by induction, and Eq.~\ref{E:N}
\[
  M(SL)=M(S)+1\equiv 2M(S)+1\equiv 2N(S)+1=N(SL)\pmod 2.
\]
Similarly, if $m$ is even, then
$SR=S(k_0,\dots,k_{m-1},k_m+1)$ and $M(SR)=M(S)$.
Hence
\[
  M(SR)=M(S)\equiv 2M(S)+2\equiv 2N(S)+2=N(SR)\pmod 2.
\]
If $M(S)=m$ is odd, then $S$ ends in $L^{k_m}$ and so $M(SL)=M(S)$
and $M(SR)=M(S)+1$ hold. Hence $M(SL)\equiv N(SL)\pmod 2$ and
$M(SR)\equiv N(SR)\pmod 2$ both hold.

(b)~The formula (\ref{E:3}) for $r(S)$ implies that Eq.~(\ref{E:LRorder})
holds. Hence moving left decreases the $r$-value, and moving right increase
the $r$-value. A left move is not counteracted by any number of right moves;
nor is a right move counteracted by any number of left moves. This
implies that the $r$-values are ordered via 
the stated alternating lexicographic ordering.
\end{proof}

\section{Continued fractions and the Stern-Brocot tree}\label{S:app}

In this section, we shall consider the tree $\TC$ in Fig.~\ref{F:TC}
whose vertices are continued fractions. Parents of vertices in this
tree are
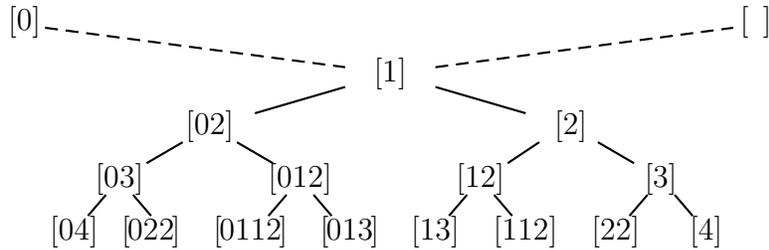
\begin{figure}[!ht]
  \begin{center}
  \psset{xunit=6mm}\psset{yunit=7mm}
  \begin{pspicture}(-0.5,0)(16,4)
    \thicklines
    \rput(1,0){$[04]$}\rput(2.7,0){$[\kern-1pt022\kern-1pt]$}
    \rput(4.9,0){$[\kern-1pt0112\kern-1pt]$}
    \rput(7.1,0){$[\kern-1pt013\kern-1pt]$}\rput(9,0){$[13]$}
    \rput(11,0){$[112]$}
    \rput(13,0){$[22]$}\rput(15,0){$[4]$}
    \rput(2,1){$[03]$}\rput(6,1){$[012]$}\rput(10,1){$[12]$}
    \rput(14,1){$[3]$}
    \rput(4,2){$[02]$}\rput(12,2){$[2]$}
    \rput(8,3){$[1]$}\rput(-0.1,4){$[0]$}\rput(16.1,4){$[\;\,]$}
    \psline(1.3,0.3)(1.7,0.7)\psline(5.3,0.3)(5.7,0.7)
    \psline(9.3,0.3)(9.7,0.7)\psline(13.3,0.3)(13.7,0.7)
    \psline(2.3,0.7)(2.7,0.3)\psline(6.3,0.7)(6.7,0.3)
    \psline(10.3,0.7)(10.7,0.3)\psline(14.3,0.7)(14.7,0.3)
    \psline(2.6,1.3)(3.4,1.7)\psline(10.6,1.3)(11.3,1.7)
    \psline(5.4,1.3)(4.6,1.7)\psline(13.4,1.3)(12.6,1.7)
    \psline(5,2.25)(7,2.75)\psline(9,2.75)(11,2.25)
    \psline[linestyle=dashed](0.35,3.875)(7,3.125)
    \psline[linestyle=dashed](9,3.125)(15.6,3.875)
  \end{pspicture}
  \end{center}
  \caption{Binary tree $\TC$ of continued fractions (with commas omitted).}
    \label{F:TC}
\end{figure}
related to `best approximations.'
Recall that a continued fraction is an expression of the form
\[
  [q_0,q_1,\dots,q_m]=
q_0+\cfrac{1}{q_1+\cfrac{1}{\phantom{1}^{\ddots}+\cfrac{1}{q_m}}}.
\]
Continued fractions can be computed recursively via the recurrence
\begin{equation}\label{E:rCF}
  [q_0]=q_0\quad\textup{and}\quad
  [q_0,\dots,q_{m-1},q_m]=[q_0,\dots,q_{m-2},q_{m-1}+1/q_m]
  \quad\textup{for $m>0$}.
\end{equation}
The tree $\TC$ has root $[1]$, and its children rules are described
in Fig.~\ref{F:Ctree}
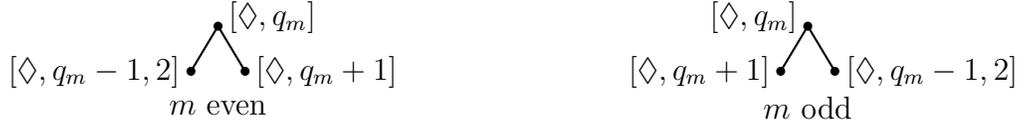
\begin{figure}[!ht]
  \begin{center}\psset{xunit=6mm,yunit=6mm}
    \begin{pspicture}(-3.5,-1.7)(3.5,0.4)
      \psline(-0.6,-1)(0,0)\psline(0.6,-1)(0,0)
      \rput(1.2,0.2){$[\lozenge,q_m]$}
      \rput(-2.75,-1){$[\lozenge,q_m-1,2]$}\rput(2.4,-1){$[\lozenge,q_m+1]$}
      \rput(0,-1.8){\textup{$m$ even}}
      \pscircle*(0,0){0.06}\pscircle*(-0.6,-1){0.06}\pscircle*(0.6,-1){0.06}
    \end{pspicture}
    \hskip35mm
    \begin{pspicture}(-3.5,-1.7)(3.5,0.4)
      \psline(-0.6,-1)(0,0)\psline(0.6,-1)(0,0)
      \rput(-1.2,0.2){$[\lozenge,q_m]$}
      \rput(-2.4,-1){$[\lozenge,q_m+1]$}\rput(2.75,-1){$[\lozenge,q_m-1,2]$}
      \rput(0,-1.8){\textup{$m$ odd}}
      \pscircle*(0,0){0.06}\pscircle*(-0.6,-1){0.06}\pscircle*(0.6,-1){0.06}
    \end{pspicture}
    \caption{Children rules for the infinite complete binary tree $\TC$ in Fig.~\ref{F:TC}.}\label{F:Ctree}
  \end{center}
\end{figure}
where `$\lozenge$' is an abbreviation for `$q_0,\dots,q_{m-1}$.'
A simple induction proves that the continued fractions $[q_0,\dots,q_{m-1},q_m]$
generated each have $q_0\geq0$, $q_1,\dots,q_{m-1}\geq1$, and $q_m\geq2$ if $m>0$. (Incidentally, this
ensures that when these continued fractions are evaluated using
(\ref{E:rCF}) that no denominators of zero are encountered.)


Let $\C$ be the set of vertices (i.e. continued fractions) of the infinite tree
$\TC$ in Fig.~\ref{F:TC}. When evaluated using (\ref{E:rCF}), a continued
fraction $[q_0,\dots,q_m]$ equals a positive rational~$\frac pq$. Positive
rationals have a natural ordering ($\frac{p_1}{q_1}<\frac{p_2}{q_2}$ if and only
if~$p_1q_2<p_2q_1$), so the continued fractions in $\C$ are naturally ordered.
We shall compare this ordering of $\C$ to the ordering of $\{L,R\}^*$
via the function $r$, see (\ref{E:rdef}). Towards this end, we define
a function $f\colon\C\to\{L,R\}^*$ by
$f([q_0,\dots,q_{m-1},q_m])=S(q_0,\dots,q_{m-1},q_m-1)$.

\begin{theorem}\label{T:app1}
Abbreviate $[q_0,\dots,q_{m-1},q_m]\in\C$ by $[\lozenge,q_m]$ where
\textup{`$\lozenge$'} means \textup{`$q_0,\dots,q_{m-1}$'.}
\begin{itemize}
  \item[(a)] The $LR$-location of $[\lozenge,q_m]$ in $\TC$ is given by the
  string $f([\lozenge,q_m])=S(\lozenge,q_m-1)$.
  \item[(b)] If $q_0,\dots,q_{m-1}$ is fixed, then $[\lozenge,q_m]$ is an
  increasing function of $q_m$ if $m$ is even, and a decreasing function
  of $q_m$ if $m$ is odd.
  \item[(c)] The function $f\colon\C\to\{L,R\}^*$ is a bijection preserving
  level, children, and order.
  \item[(d)] The parents of $[\lozenge,q_m]$ are the two closest smaller-level
  approximations
  to $[\lozenge,q_m]$. That is, of the $2^{q_0+\cdots+q_m}-1$ continued fractions
  $[q'_0,\dots,q'_n]$ with $\sum_{i=1}^nq'_i<\sum_{i=1}^m q_i$, the two
  closest to $[\lozenge,q_m]$ are the parent continued fractions
  of $[\lozenge,q_m]$.
\end{itemize}
\end{theorem}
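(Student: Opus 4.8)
The plan is to establish (a)--(d) in turn, using each part to prove the next and drawing on Theorems~\ref{T:main} and~\ref{T:r}. For (a) I would induct on the level, verifying that $f$ intertwines the children rules of $\TC$ in Fig.~\ref{F:Ctree} with the rules $S\mapsto SL$ and $S\mapsto SR$ of the $LR$-tree. The base case is the root, where $f([1])=S(0)=\eps$. For the step, set $S=f([\lozenge,q_m])=S(q_0,\dots,q_{m-1},q_m-1)$. If $m$ is even then $S$ ends in $R^{q_m-1}$, and $f$ sends the left child $[\lozenge,q_m-1,2]$ to $S(q_0,\dots,q_{m-1},q_m-1,1)=SL$ and the right child $[\lozenge,q_m+1]$ to $S(q_0,\dots,q_{m-1},q_m)=SR$; the case $m$ odd is identical with $L$ and $R$ exchanged. (A final exponent $q_m-1$ can vanish only at the root, which is handled exactly as in Theorem~\ref{T:main}(f).) Reading off the turns from the root to $[\lozenge,q_m]$ then spells $f([\lozenge,q_m])$, which is~(a), and the same computation proves that $f$ preserves children, one of the assertions of~(c).

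For (b) I would prove the sharper real-variable claim: for fixed $q_0,\dots,q_{m-1}$ the map $t\mapsto[q_0,\dots,q_{m-1},t]$ is strictly increasing on $t>0$ when $m$ is even and strictly decreasing when $m$ is odd. This follows by induction on $m$ from~(\ref{E:rCF}): since $[q_0,\dots,q_{m-1},t]=[q_0,\dots,q_{m-2},q_{m-1}+1/t]$ and $t\mapsto q_{m-1}+1/t$ is decreasing, the direction reverses at each level, the base case $[t]=t$ being increasing. Restricting $t$ to integers yields~(b); I will also invoke the real-variable form with $t=q_m-\tfrac12$, using $[\lozenge,q_m-1,2]=[\lozenge,q_m-\tfrac12]$.

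The core of~(c) is order-preservation. Level-preservation is immediate, since $|f([\lozenge,q_m])|=q_0+\cdots+q_{m-1}+(q_m-1)$ is the level of $[\lozenge,q_m]$; bijectivity holds because $f$ acts on index tuples by $(q_0,\dots,q_m)\mapsto(q_0,\dots,q_{m-1},q_m-1)$, a bijection between the admissible continued-fraction tuples and the admissible $LR$-exponent tuples, together with uniqueness of the continued-fraction expansion whose last term is at least~$2$. For order I would show that $\TC$ is a binary search tree for the rational ordering; since the $LR$-tree is one for $r$ by~(\ref{E:LRorder}) and $f$ is an isomorphism of rooted binary trees by~(a), this gives $[\lozenge,q_m]<[\lozenge',q'_n]$ exactly when $r(f([\lozenge,q_m]))<r(f([\lozenge',q'_n]))$. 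Part~(b) supplies the local comparisons: for $m$ even the right child $[\lozenge,q_m+1]$ exceeds the node and the left child $[\lozenge,q_m-\tfrac12]$ is below it, with the inequalities reversed for $m$ odd. The main obstacle is promoting these \emph{local} inequalities to the \emph{global} search-tree property, since dominating one's child does not by itself guarantee dominating an entire subtree. I would settle this by attaching to each vertex the open interval it governs (in the mediant / Stern--Brocot manner) and checking inductively that the whole subtree remains inside it; alternatively one can verify directly that the value ordering of $\C$ is the alternating-lexicographic ordering of index tuples and match it through $f$ to Corollary~\ref{C}(b), the delicate point there being the comparison of tuples of different lengths, where a tail $[q'_s,\dots,q'_n]\in(q'_s,q'_s+1)$ must be weighed against a single partial quotient.

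Finally, (d) is a transport of structure. The remark preceding Theorem~\ref{T:main} shows that, among generalized strings of smaller level, $P_L(S)$ is the nearest one with $r<r(S)$ and $P_R(S)$ the nearest with $r>r(S)$; thus $\{P_L(S),P_R(S)\}=\{P_C(S),P_D(S)\}$ are precisely the two nearest lower-level vertices, one on each side. Applying the level- and order-preserving bijection $f^{-1}$ from~(c), the close and distant parents of $[\lozenge,q_m]$ are the immediate predecessor and successor of $[\lozenge,q_m]$, in value, among all lower-level continued fractions --- that is, its two closest smaller-level approximations. The enumeration count is then read off from Theorem~\ref{T:main}(b), which tallies the vertices of bounded level.
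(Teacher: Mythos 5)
Your proposal is correct, and for parts (a), (b) and (d) it follows the paper's argument essentially verbatim: the same induction on level intertwining the children rules for (a), the same induction on $m$ via (\ref{E:rCF}) for (b) (your real-variable strengthening and the identity $[\lozenge,q_m-1,2]=[\lozenge,q_m-\tfrac12]$ are harmless refinements), and for (d) you supply \emph{more} detail than the paper, which disposes of it in one line as a consequence of part (c) and the definition of parents. The genuine divergence is in the order-preservation step of (c), which is the technical heart of the theorem. The paper proves directly that the value ordering of $\C$ coincides with the alternating lexicographic ordering of Corollary~\ref{C}(b): it first establishes the bounding lemma $k_0<[k_0,\dots,k_m]<k_0+1$ for $m\geq1$ --- your ``delicate point'' about weighing a tail $[q'_s,\dots,q'_n]\in(q'_s,q'_s+1)$ against a single partial quotient is exactly this lemma --- and then inducts on the first index of disagreement using (\ref{E:rCF}) and part (b). Your primary route, showing that $\TC$ is a binary search tree by attaching governed intervals and then invoking the fact that a left/right-child-preserving isomorphism between two binary search trees preserves order, is a valid and arguably more conceptual alternative; but be aware that verifying the interval containment for the subtree rooted at $[\lozenge,q_m]$ ultimately requires the same kind of bounding estimate, so it is not a shortcut. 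Since you also name the paper's route as a fallback and correctly isolate the lemma it needs, I see no gap, only an unexecuted (but executable) sketch at the one place where real work is required.
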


\begin{proof}
(a)~Define the {\it length} of $[q_0,\dots,q_m]$ to be $q_0+\cdots+q_m-1$.
Our proof uses induction on the length of $[q_0,\dots,q_m]$.
The $LR$-location of $[q_0]$ in $\TC$ is $R^{q_0-1}$, and $f([q_0])=S(q_0-1)$.
In particular, the base case of length~0 (when $q_0\kern-1pt=\kern-1pt1$) holds.
Suppose now that $q_0+\cdots+q_m\kern-1pt>\kern-1pt 1$.
The length of the children of $[\lozenge,q_m]$ in
Fig.~\ref{F:Ctree} is one more than the length of $[\lozenge,q_m]$.
If $m$ is even, then induction gives
\begin{align*}
  &f([\lozenge,q_m-1,2])=f([\lozenge,q_m])L=S(\lozenge,q_m-1)L
  =S(\lozenge,q_m-1,1),\quad\textup{and}\\
  &f([\lozenge,q_m+1])=f([\lozenge,q_m])R=S(\lozenge,q_m-1)R=S(\lozenge,q_m).
\end{align*}
If $m$ is odd, then induction gives
\begin{align*}
  &f([\lozenge,q_m+1])=f([\lozenge,q_m])L=S(\lozenge,q_m-1)L=S(\lozenge,q_m),
  \quad\textup{and}\\
  &f([\lozenge,q_m-1,2])=f([\lozenge,q_m])R=S(\lozenge,q_m-1)R
  =S(\lozenge,q_m-1,1).
\end{align*}
Thus the function $f$ gives the $LR$-location of each continued fraction,
as desired.

(b)~We use induction on~$m$. Certainly $[q_0]=q_0$ is an increasing function of
$q_0$, and $[q_0,q_1]=q_0+\frac1{q_1}$ is a decreasing function of $q_1$.
Suppose now that $m\geq2$. If $m$ is even, then
$[q_0,\dots,q_m]=[q_0,\dots,q_{m-2},q_{m-1}+\frac1{q_m}]$ decreases (by induction)
precisely when $q_{m-1}+\frac1{q_m}$ increases. Thus $[\lozenge,q_m]$
increases when $q_m$ increases. If $m$ is odd, then
$[q_0,\dots,q_m]=[q_0,\dots,q_{m-2},q_{m-1}+\frac1{q_m}]$ increases
(by induction) precisely when $q_{m-1}+\frac1{q_m}$ increases. Thus
$[\lozenge,q_m]$ decreases when $q_m$ increases. This completes the induction.

(c)~It is clear that $f$ is surjective since
$f([k_0,\dots,k_{m-1},k_m+1])=S(k_0,\dots,k_{m-1},k_m)$ is a typical $LR$-string
in $\{L,R\}^*$. It is also clear that $f$ is injective, and hence $f$ is
bijective.
The level of $S(q_0,\dots,q_{m-1},q_m-1)$ is $q_0+\cdots+q_m-1$.
A simple induction shows that the level of $[q_0,\dots,q_m]$ in $\TC$ is
also $q_0+\cdots+q_m-1$. (This is true for the root $[1]$ of $\TC$.
If $[q_0,\dots,q_m]$ has level $q_0+\cdots+q_m-1$, then by Fig.~\ref{F:Ctree}
its children have level $q_0+\cdots+q_m$.) Part~(a) and the children rules
(Fig.~\ref{F:Ctree}) show that $f$ preserves
children, i.e. $f(C_L(v))=C_L(f(v))$ and $f(C_R(v))=C_R(f(v))$ for all
$v\in\C$.

It remains to prove that $f$ preserves order. This is true if $f^{-1}$
preserves order. This, in turn, amounts to proving that
the alternating lexicographic ordering of strings in Cor.~\ref{C}(b)
is the same as ordering of continued fractions (i.e. of the rational numbers).
Suppose that $S:=S(k_0,\dots,k_m-1)$ and $S':=S(k'_0,\dots,k'_n-1)$ where
$r(S)<r(S')$. By Cor.~\ref{C}(b) there exists an $i$ for which
$k_0=k'_0$, \dots, $k_{i-1}=k'_{i-1}$,
and $k_i<k'_i$ when $i$ is even, and $k_i>k'_i$ when $i$ is odd.
We shall prove the rational $v:=[k_0,\dots,k_m]$ is less
than $v':=[k'_0,\dots,k'_n]$.

Before proving the base case when $i=0$, we digress to prove
$k_0\leq[k_0,\dots,k_m]<k_0+1$.
This is true when $m=0$ as $[k_0]=k_0$. We prove that
$k_0<[k_0,\dots,k_m]<k_0+1$ holds for $m\geq1$. The proof of this
stronger statement uses induction on $m$. It is true
when $m=1$ as $k_0<[k_0,k_1]=k_0+\frac1{k_1}\leq k_0+\frac12$ because,
in our context, $k_m\geq2$ for $m\geq1$. Suppose the stronger statement
is true for $m-1$ where $m>1$. Then, by induction,
$k_1<[k_1,\dots,k_m]<k_1+1$ holds. Since $k_1\geq1$, taking inverses shows
\[
  \frac1{k_1+1}<[0,k_1,\dots,k_m]<\frac1{k_1},
\]
and adding $k_0$ implies
\[
  k_0<k_0+\frac1{k_1+1}<[k_0,k_1,\dots,k_m]<k_0+\frac1{k_1}\leq k_0+1.
\]
Hence $k_0<[k_0,k_1,\dots,k_m]<k_0+1$ holds for $m\geq1$, establishing the
digression.

Return now to the base case $i=0$ of our induction.
Certainly $k_0<k'_0$ implies $v<v'$ when $m$ or $n$ is zero. If $m$ and
$n$ are both positive, then $k_0<k'_0$ implies
\[
  k_0\leq v<k_0+1\leq k'_0\leq v'<k'_0+1.
\]
In either case, $k_0<k'_0$ implies $v<v'$.
Suppose now $i\geq1$ is even and $k_0=k'_0$, \dots, $k_{i-1}=k'_{i-1}$, and
$k_i<k'_i$. Abbreviate `$k_0,\dots,k_{i-1}$' by $\lozenge$.
Then Eq.~(\ref{E:rCF}) implies
\[
  v=[\lozenge,k_i,\dots,k_m]=[\lozenge,[k_i,\dots,k_m]]\quad\textup{and}\quad
  v'=[\lozenge,k'_i,\dots,k'_n]=[\lozenge,[k'_i,\dots,k'_n]].
\]
Hence the base case says $k_i<k'_i$ implies $[k_i,\dots,k_m]<[k'_i,\dots,k'_n]$.
Therefore part~(b) implies
$[\lozenge,[k_i,\dots,k_m]]<[\lozenge,[k'_i,\dots,k'_n]]$
and $v<v'$, as desired. The case when $i$ is odd is proved similarly.
This completes the proof that $S<S'$ implies $v<v'$.

(d)~This follows from the definition of parent fractions and part~(c).
\end{proof}

The continued fractions on the vertices of the tree
in Fig.~\ref{F:TC} give rise to
a tree of rational numbers. This is the well-known
Stern-Brocot tree~\cite[p.\,117]{GKP}, which (remarkably)
lists every positive rational number
(in reduced form) precisely once. To each string $S\in\{L,R\}^*$
there corresponds a {\it reverse string} $\overline{S}$ defined by
$\overline{\eps}=\eps$, $\overline{SL}=L\overline{S}$, and
$\overline{SR}=R\overline{S}$. Reversing
(or swapping $S\leftrightarrow\overline{S}$) vertices in a tree
gives another tree, called the {\it reverse tree}. The reverse tree of the
Stern-Brocot tree is another well-known tree called the Calkin-Wilf
tree~\cite{CW}. (In Fig.~\ref{F:SC}, $\frac23\leftrightarrow\frac32$ because
$LR\leftrightarrow RL$.)
The reader may better understand the connection between
parent and children vertices by studying the Stern-Brocot and
Calkin-Wilf trees, see~\cite{G11}.

\begin{figure}[!ht]
  \begin{center}
  \psset{xunit=4mm}\psset{yunit=7mm}
  \begin{pspicture}(0,0)(16,4)
    \thicklines
    \rput(1,0){$\frac14$}\rput(3,0){$\frac25$}\rput(5,0){$\frac35$}
    \rput(7,0){$\frac34$}\rput(9,0){$\frac43$}\rput(11,0){$\frac53$}
    \rput(13,0){$\frac52$}\rput(15,0){$\frac41$}
    \rput(2,1){$\frac13$}\rput(6,1){$\frac23$}\rput(10,1){$\frac32$}
    \rput(14,1){$\frac31$}
    \rput(4,2){$\frac12$}\rput(12,2){$\frac21$}
    \rput(8,3){$\frac11$}\rput(0,4){$\frac01$}\rput(16,4){$\frac10$}
    \psline(1.3,0.3)(1.7,0.7)\psline(5.3,0.3)(5.7,0.7)
    \psline(9.3,0.3)(9.7,0.7)\psline(13.3,0.3)(13.7,0.7)
    \psline(2.3,0.7)(2.7,0.3)\psline(6.3,0.7)(6.7,0.3)
    \psline(10.3,0.7)(10.7,0.3)\psline(14.3,0.7)(14.7,0.3)
    \psline(2.3,1.15)(3.7,1.85)\psline(10.3,1.15)(11.7,1.85)
    \psline(5.7,1.15)(4.3,1.85)\psline(13.7,1.15)(12.3,1.85)
    \psline(4.3,2.07)(7.7,2.92)\psline(8.3,2.92)(11.7,2.15)
    \psline[linestyle=dashed](0.3,3.96)(7.7,3.04)
    \psline[linestyle=dashed](8.3,3.04)(15.7,3.96)
  \end{pspicture}
  \hskip20mm
  \psset{xunit=4mm}\psset{yunit=7mm}
  \begin{pspicture}(0,0)(16,4)
    \thicklines
    \rput(1,0){$\frac14$}\rput(3,0){$\frac43$}\rput(5,0){$\frac35$}
    \rput(7,0){$\frac52$}\rput(9,0){$\frac25$}\rput(11,0){$\frac53$}
    \rput(13,0){$\frac34$}\rput(15,0){$\frac41$}
    \rput(2,1){$\frac13$}\rput(6,1){$\frac32$}\rput(10,1){$\frac23$}
    \rput(14,1){$\frac31$}
    \rput(4,2){$\frac12$}\rput(12,2){$\frac21$}
    \rput(8,3){$\frac11$}\rput(0,4){$\frac01$}\rput(16,4){$\frac10$}
    \psline(1.3,0.3)(1.7,0.7)\psline(5.3,0.3)(5.7,0.7)
    \psline(9.3,0.3)(9.7,0.7)\psline(13.3,0.3)(13.7,0.7)
    \psline(2.3,0.7)(2.7,0.3)\psline(6.3,0.7)(6.7,0.3)
    \psline(10.3,0.7)(10.7,0.3)\psline(14.3,0.7)(14.7,0.3)
    \psline(2.3,1.15)(3.7,1.85)\psline(10.3,1.15)(11.7,1.85)
    \psline(5.7,1.15)(4.3,1.85)\psline(13.7,1.15)(12.3,1.85)
    \psline(4.3,2.07)(7.7,2.92)\psline(8.3,2.92)(11.7,2.15)
    \psline[linestyle=dashed](0.3,3.96)(7.7,3.04)
    \psline[linestyle=dashed](8.3,3.04)(15.7,3.96)
  \end{pspicture}
  \end{center}
  \caption{(a) Stern-Brocot tree; and the reversed 
    (b) Calkin-Wilf tree.}\label{F:SC}
\end{figure}
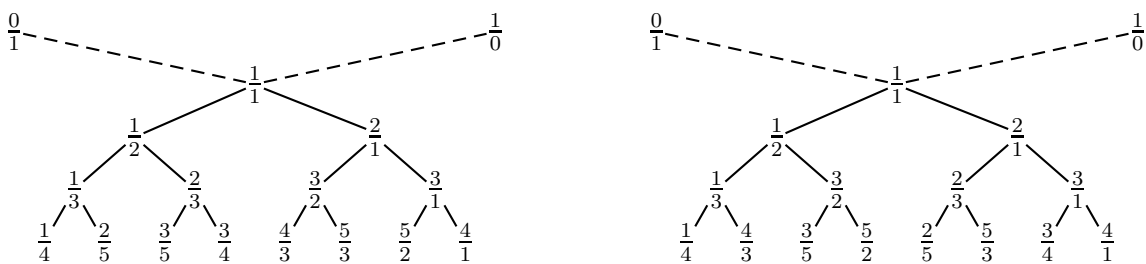
There are further applications of parent vertices to a complete binary tree
associated with the Cantor\footnote{The original discoverer appears to be
H.J.S. Smith~\cite[p.\,147]{S}, see~\cite{H} for details.} 
set, however, exploring these goes beyond the scope of this note.

\vskip1mm
\noindent
{\bf Acknowledgement:} The author is grateful to the referee for his/her suggestions.

\end{document}